\newcommand{\tykkp}{\mathbb{P}}
\newcommand{\fino}{\mathcal{O}}
\newcommand{\finixi}{\mathcal{I}_{\xi}}
\newcommand{\finfca}{\mathcal{F}_{C,A}}
\newcommand{\fing}{\mathcal{G}}
\newcommand{\fini}{\mathcal{I}}
\newcommand{\rk}{\mathrm{rk}}
\newtheorem{Prop}{Proposition}[section]
\newtheorem{Theorem}[Prop]{Theorem}
\newtheorem{Lemma}[Prop]{Lemma}
\newtheorem{Cor}[Prop]{Corollary}
\newtheoremstyle{Property}{\topsep}{\topsep}%
  {}
  {}
  {\bfseries}
  {.}
  { }
  {\thmname{#1}\thmnumber{ #2}\thmnote{ #3}}
\theoremstyle{Property}
\newtheoremstyle{Eksempel}{\topsep}{\topsep}%
  {}
  {}
  {\bfseries}
  {.}
  { }
  {\thmname{#1}\thmnumber{ #2}\thmnote{ #3}}
\theoremstyle{Example}
\newtheoremstyle{Remark}{\topsep}{\topsep}%
  {}
  {}
  {\bfseries}
  {.}
  { }
  {\thmname{#1}\thmnumber{ #2}\thmnote{ #3}}
\theoremstyle{Remark}
\newtheorem{Remark}[Prop]{Remark}
\title{Pencils and nets of small degree on curves on smooth, projective surfaces of Picard rank 1 and very ample generator}
\author{Nils Henry Rasmussen}
\date\today
\begin{document}

\maketitle

\begin{abstract}
Let $S$ be a smooth, projective surface of Picard rank 1 and very ample generator embedding $S$ into $\tykkp^n$. Let $C\in |\fino_S(m)|$ for $m\geq 5$ be a smooth curve. We prove that any base-point free, complete $g^r_d$ on $C$ for $r\in\{1,2\}$ and $d$ small enough is cut out by a hyperplane section restricted to a multisecant $(n-r-1)$-plane.
\end{abstract}

\section{Introduction}\label{intro}

Let $C$ be a smooth curve embedded in $\tykkp^n$, and let $|A|$ be a complete, base-point free linear series $g^r_d$ on $C$, where $r \in \{1, 2\}$. Our aim in this paper is to find instances for when $|A|$ is cut out by hyperplane sections restricted to an $(n-r-1)$-plane secant on $C$.

The question in focus has previously been much studied, in particular for the case when $C \subseteq \tykkp^3$. In \cite{ciliberto1984uniqueness}, it was proven that when $C$ lies on a surface of degree $>4$ in $\tykkp^3$, the linear system of plane sections is the only $g^3_{\deg(C)}$ that exists. In \cite{basili1996indice}, it was proven that when $C$ is a smooth complete intersection curve in $\tykkp^3$, any pencil computing the gonality is given by a plane section minus a multisecant of maximal degree. In \cite{Hartshorne08}, Hartshorne and Schlesinger proved that the gonality of a general ACM curve in $\tykkp^3$ is also computed by multisecants. A further overview of previous research done on gonality and multisecants is given in the introduction of \cite{Hartshorne08}.

This paper is an attempt to prove a slightly more general result for the case where $C$ lies on a smooth, projective surface $S$ with Picard rank $1$ and very ample generator. We will here study $g^1_d$'s of not necessarily minimal degree, in addition to $g^2_d$'s. The methods we use are taken from \cite[Proposition 5.5]{Hartshorne08}. The idea is to create a map from $\fino_S(1)$ to the $g^r_d$ $A$ on $C$, and make sure that the map is surjective on global sections.

In what follows, $S$ will denote a smooth, projective surface with Picard group generated by a very ample line bundle $\fino_S(H)$ satisfying $h^0(\fino_S(H))\geq 4$. This surface could be a K3 surface with Picard rank 1, embedded into projective space using $\fino_S(H)$, or a general complete-intersection surface in $\tykkp^n$ for $n\geq 3$, except if it is of degree $\leq 3$ in $\tykkp^3$ or type $(2,2)$ in $\tykkp^4$ (by a generalisation of the Noether--Lefschetz theorem, see \cite{lefschetz1921certain}. Also, the introduction in \cite{kim1991noether} gives a nice overview of further research done in this area).

We will always be considering $S$ as embedded into projective space by $\fino_S(H)$. When writing $\fino_S(m)$, we mean $\fino_S(mH)$. The characteristic is assumed to be $0$ throughout the paper.

Our main result is the following:

\begin{Theorem}\label{Main}
Let $S$ and $\fino_S(H)$ be as above. Let $C\in |\fino_S(m)|$ for $m\geq 5$ be a smooth curve on $S$. Then any base-point free, complete $g^r_d$ on $C$, with $r\in\{1,2\}$ and $d\leq mH^2$, is given by restricting hyperplane sections to an $(n-r-1)$-plane secant on $C$.
\end{Theorem}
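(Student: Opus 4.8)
The plan is to realize the linear series through a Lazarsfeld--Mukai bundle on $S$ and then exploit the rigidity coming from $\Pic(S)=\tykkz\cdot H$. Writing $A$ for the line bundle defining the base-point-free $g^r_d$ and $i\colon C\hookrightarrow S$ for the inclusion, the first step is to form $\finfca$ as the kernel of the evaluation map,
$$0\to\finfca\to H^0(C,A)\otimes\fino_S\to i_*A\to 0,$$
which is locally free of rank $r+1$ since $A$ is globally generated, together with its dual Lazarsfeld--Mukai bundle $\fineca=\finfca^\vee$. A Chern-class computation on this sequence gives $\rk\fineca=r+1$, $c_1(\fineca)=mH$ and $c_2(\fineca)=d$; these are the only invariants I will need.

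Second, I would test $\fineca$ against the Bogomolov inequality. Because $H$ is ample and generates $\Pic(S)$, slope stability is measured by a single integer, and the discriminant is $\Delta(\fineca)=2(r+1)d-r\,m^2H^2$. Using $d\le mH^2$ one gets $\Delta(\fineca)\le m(4-m)H^2$ when $r=1$ and $\Delta(\fineca)\le 2m(3-m)H^2$ when $r=2$, both strictly negative precisely because $m\ge 5$; this is exactly where the bound on $m$ is spent, and it is tight for pencils. Hence $\fineca$ is Bogomolov-unstable, so it is not $\mu$-semistable and, by $\Pic(S)=\tykkz\cdot H$, admits a saturated destabilizing sub-line bundle $\fino_S(kH)\hookrightarrow\fineca$ with $kH^2>\mu(\fineca)=mH^2/(r+1)$.

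Third, I would convert this sub-line bundle into geometry. Combining the inclusion $\fino_S(kH)\hookrightarrow\fineca$ with the defining sequence, and using the bounds $r\in\{1,2\}$ and $d\le mH^2$ to pin down $k$ and the rank-$r$ quotient, one obtains a nonzero section of $B:=\fino_C(1)\otimes A^{-1}$; its zero divisor $D$ is effective of degree $mH^2-d$. Taking $\Lambda\cong\tykkp^{n-r-1}$ to be the intersection of the hyperplanes containing $D$, the residual series $\fino_C(1)(-D)$ is identified with $A$, so $|A|$ is at least contained in the series cut on $C$ by the hyperplanes through $\Lambda$.

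The final and hardest step is to upgrade this containment to an equality, i.e. to make the induced map surjective on global sections: one must show that the hyperplanes through $\Lambda$ cut out the \emph{complete} system $|A|$ rather than a proper subseries, equivalently that $H^0(\fino_S(1))$ surjects onto $H^0(C,A)$ after factoring out $D$. I expect this to reduce to a vanishing of the form $H^1(\finfca(1))=0$ (or, dually, to the injectivity of a Petri-type multiplication map), again extracted from the destabilizing sequence together with the numerical hypotheses. Separating the cases $r=1$ and $r=2$, excluding the borderline values of $d$ near $mH^2$, and ruling out that the destabilizing quotient itself fails to be torsion-free, is where the genuine casework lies and where I expect the main obstacle to be.
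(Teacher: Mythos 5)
Your overall strategy --- Lazarsfeld--Mukai bundle, Bogomolov instability, and a destabilising subobject producing the multisecant --- is the same as the paper's, and your discriminant computation is consistent with the paper's appeal to Bogomolov's theorem. But the proposal has genuine gaps at exactly the points you flag as ``where the main obstacle lies,'' and these are not routine. First, for $r=2$ non-semistability of the rank-$3$ bundle $\fineca$ gives a destabilising subsheaf of rank $1$ \emph{or} $2$; the existence of a destabilising sub-\emph{line} bundle is not automatic. The paper instead destabilises $\finfca$, takes the maximal destabilising subsheaf $M$ of rank $\leq r$, and proves in Lemma \ref{lemma} that $\rk(M)=r$, so that the quotient $N$ has rank $1$; only then does your $\fino_S(kH)\hookrightarrow\fineca$ exist, namely as $N^{\vee}$. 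Second, ``using the bounds to pin down $k$'' is precisely the content of that Lemma: one must show $c_1(M)=\fino_S(-H)$, i.e.\ $k=m-1$, which requires establishing $c_2(M)\geq 0$ and $c_2(N)\geq 0$ (by applying Bogomolov again to $M$ and to $N$) and then excluding $a\geq 2$ via $d=c_2(\finfca)\geq c_1(M).c_1(N)=a(m-a)H^2>mH^2$; note that $m\geq 5$ is used here a second time, to make this last inequality strict. None of this appears in your sketch.

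The most serious gap is the completeness step, which you explicitly leave open. The paper does \emph{not} prove a vanishing of the form $H^1(\finfca(1))=0$. Its device (taken from Hartshorne--Schlesinger) is to set $\fing=\mathrm{coker}(M\hookrightarrow\fino_S^{\oplus r+1})$, so that the evaluation map factors as $\fino_S^{\oplus r+1}\rightarrow\fing\xrightarrow{\phi}A$ with $\ker(\phi)\cong N$ by the snake lemma. Since $N$ is torsion-free of rank $1$ with negative $c_1$, we get $h^0(N)=0$, so $H^0(\phi)$ is injective; it is surjective because $\mathrm{ev}$ is surjective on global sections; hence $h^0(\fing)=r+1$, $\fing=\fino_S(1)\otimes\fini_{\xi}$, and the hyperplanes through $\xi$ cut out exactly the complete series $|A|$. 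This elementary argument replaces your hoped-for cohomology vanishing, and it also removes any need to ``exclude borderline values of $d$'' --- the theorem as stated includes $d=mH^2$. Without this step your argument shows only that $|A|$ is \emph{contained} in the series cut by hyperplanes through some $(n-r-1)$-plane, which is strictly weaker than the claim.
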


An immediate consequence of Theorem \ref{Main} is the following:

\begin{Cor}\label{cor}
Let $S$ be a general complete intersection surface in $\tykkp^4$ of type $(2,3)$ (hence a K3 surface), with Picard group generated by the class of hyperplane sections $H$. Then, for $m\geq 5$, the minimal degree of a $g^2_d$ on a smooth curve $C\in|\fino_S(m)|$ is $6m-3$.
\end{Cor}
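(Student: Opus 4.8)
The plan is to extract the numerical data of the situation and then let Theorem \ref{Main} supply the structure, so that the only genuinely geometric input needed is the existence of a suitable trisecant. Since $S$ is a $(2,3)$ complete intersection we have $H^2=2\cdot 3=6$, so a smooth $C\in|\fino_S(m)|$ has $\deg C=mH^2=6m$; moreover $n=4$ and $r=2$, so the relevant multisecant plane is an $(n-r-1)=1$-plane, i.e.\ an honest line in $\tykkp^4$. Write $S=Q\cap F$ with $Q$ a quadric and $F$ a cubic threefold.

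First I would bound how many points of $C$ a line $L$ can contain. Every irreducible curve on $S$ has class $aH$ with $a\ge 1$, hence degree $aH^2=6a\ge 6$, so $S$ contains no line and $L\not\subseteq S$. Thus $L\cap S=L\cap Q\cap F$ is finite: if $L\not\subseteq Q$ then $\length(L\cap S)\le\length(L\cap Q)\le 2$, while if $L\subseteq Q$ then $L\cap S=L\cap F$ has length at most $3$ by B\'ezout on $L$. Consequently any line meets $C$ in at most $3$ points, with $3$ attainable only for lines lying on $Q$.

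For the lower bound, let $d_0$ be the minimal degree of a $g^2_d$ on $C$ and let $A$ attain it. A base point would let me replace $A$ by $A(-p)$, lowering the degree, so $|A|$ is base-point free; granting this, if $h^0(A)\ge 4$ then $h^0(A(-p))=h^0(A)-1\ge 3$ for every $p\in C$, giving a net of smaller degree and again contradicting minimality. Hence $|A|$ is a complete, base-point-free $g^2_{d_0}$. Assuming the existence statement below, $d_0\le 6m-3\le 6m=mH^2$, so Theorem \ref{Main} applies and $|A|$ is cut out by the hyperplanes through a line $L$; therefore $d_0=\deg C-\length(C\cap L)=6m-\length(C\cap L)\ge 6m-3$ by the preceding paragraph.

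The hard part will be producing the trisecant, and this is where I expect the real work to lie. The lines on the smooth quadric threefold $Q$ form the Fano variety $F(Q)$, smooth of dimension $3$, and each $L\in F(Q)$ determines the length-$3$ scheme $L\cap F\subseteq S$. I would study the incidence $J=\{(L,C)\in F(Q)\times|\fino_S(m)| : L\cap F\subseteq C\}$: imposing passage through the three points of $L\cap F$ costs at most $3$ conditions, so the fibres of $J\to F(Q)$ have dimension at least $\dim|\fino_S(m)|-3$ and $\dim J\ge\dim|\fino_S(m)|$. The point is then to show the projection $J\to|\fino_S(m)|$ is dominant, so that a general $C$ carries such a trisecant $L$; since $C$ is nondegenerate, the hyperplanes through $L$ cut on $C$ a base-point-free system of dimension exactly $2$ and degree $6m-3$, giving the required $g^2_{6m-3}$ and hence $d_0\le 6m-3$. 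Because the two sides of $J\to|\fino_S(m)|$ have equal dimension, dominance amounts to checking that the three incidence conditions are generically independent, equivalently that some $C$ carries a finite, nonempty set of such trisecants; I would establish this either by a trisecant-type enumerative computation on $Q$ or by exhibiting a single unobstructed member. Combining the two bounds then yields $d_0=6m-3$.
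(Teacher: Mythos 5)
Your strategy coincides with the paper's: the lower bound $d_0\ge 6m-3$ comes from Theorem \ref{Main} combined with the B\'ezout observation that $S$ (hence $C$) admits no $4$-secant lines and that its trisecant lines are exactly the lines on the quadric $Q$; the upper bound comes from a dimension count on the incidence variety over the three-dimensional Fano variety of lines on $Q$. Your preliminary reductions --- that a minimal-degree $g^2$ is automatically complete and base-point free, and that the hypothesis $d_0\le mH^2$ needed to invoke Theorem \ref{Main} is supplied by the existence half of the argument --- are points the paper passes over in silence, and they are worth recording.

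The one step you leave open, dominance of $J\to|\fino_S(m)|$ (i.e.\ the actual existence of a trisecant line on the curves in question), is exactly where the paper's proof does its work, and it is closed by the dimension count you have already set up rather than by any enumerative computation or explicit example. The paper's observation is that for \emph{every} line $\Gamma\subseteq Q$ the length-$3$ scheme $\Gamma\cap S$ imposes exactly $3$ independent conditions on $|\fino_S(m)|$ (for $m\ge 5$ this system separates any length-$3$ subscheme), so every fibre of $J\to T$ has dimension exactly $\dim|\fino_S(m)|-3$ and hence $\dim J=\dim|\fino_S(m)|=3m^2+1$; since the fibres of the projection to $|\fino_S(m)|_s$ are finite and that projection is proper, its image is closed of full dimension, hence everything. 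In other words, your proposed reduction ``dominance amounts to checking that the three incidence conditions are generically independent'' is correct, but the conditions are in fact independent everywhere, which upgrades the conclusion from ``a general $C$'' to all smooth $C\in|\fino_S(m)|$ --- as the statement of the corollary requires. With that observation inserted your argument is complete and is essentially the paper's.
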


\begin{proof}
Let $T$ be the Fano scheme of lines contained in the quadric hypersurface containing $S$. This scheme corresponds precisely to the set of $3$-secant lines on $S$. The dimension of $T$ is $3$.

Let $\mathcal{I}$ be the incidence variety given by pairs $(C,\Gamma)$ such that $C$ is smooth in $|\fino_S(m)|$ and $\Gamma\in T$ is a $3$-secant line on $C$. Denoting smooth curves in $|\fino_S(m)|$ by $|\fino_S(m)|_s$, we have the following diagram, where $p$ and $q$ denote the natural projections:
$$\xymatrix{\mathcal{I} \ar[d]^<<<<<{q} \ar[r]^<<<<<{p} & |\fino_S(m)|_s\\
T&\,\,\,\,\,\,\,\,\,\,\,\,\,\,\,\,\,\,\,\,\,\,\,\,\,\,\,\,\,.}$$

For any $\Gamma\in T$, $\Gamma\cap S$ will impose $3$ independent conditions on the elements in $|\fino_S(m)|_s$. Since the elements in $|\fino_S(m)|_s$ are general in $|\fino_S(m)|$, giving us that $\dim|\fino_S(m)|_s=\dim|\fino_S(m)|=3m^2+1$, it is therefore clear that $q$ is surjective, and that each fibre corresponds to a codimension $3$ subscheme of $|\fino_S(m)|_s$. Since $\dim(T)=3$, it follows that each fibre of $p$ must have dimension $0$, and so each curve in $|\fino_S(m)|_s$ has a finite number of $3$-secant lines.

Since $S$ can't have any $4$-secant lines, it follows that the minimal degree of a $g^2_d$ on any $C\in|\fino_S(m)|_s$ is $C.H-3=mH^2-3=6m-3$.
\end{proof}

\section{Proof of the theorem}
Following the work of Lazarsfeld and Tyurin \cite{Lazarsfeld, Tyurin}, given a smooth curve $C$ of genus $g$ on $S$ and a base-point free, complete $g^r_d$ $|A|$ on $C$, one defines a vector-bundle $\finfca$ on $S$ as the kernel of the evaluation morphism $H^0(C,A)\otimes\fino_S\rightarrow A \rightarrow 0$. The bundle $\finfca$ (or, more frequently, its dual) is often referred to as the \emph{associated Lazarsfeld--Mukai bundle} of $C$ and $A$. The bundle has the following properties:
\begin{itemize}
\item $\mathrm{rk}(\finfca)=r+1$.
\item $\mathrm{det}(\finfca)=\fino_S(-C)$.
\item $c_2(\finfca)=d$.
\item $h^0(S,\finfca)=0$.
\item The dual, $\finfca^{\vee}$, is globally generated away from a finite set.
\end{itemize}

In the cases we are interested in, where $C\in |\fino_S(m)|$ with $m\geq 5$, and $d\leq mH^2$, it follows from \cite[Theorem]{bogomolov1995stable} that $\finfca$ is non-$\fino_S(H)$-stable. There thus exists a maximal destabilising sequence,
\begin{equation}\label{exact}
0\rightarrow M\rightarrow\finfca\rightarrow N\rightarrow 0,
\end{equation}
where $M$ is a vector-bundle with $\rk(M)\leq r$ satisfying $\frac{1}{\mathrm{rk}(M)}c_1(M).H>-\frac{1}{r+1}mH^2$, and $N$ is torsion-free and $\fino_S(H)$-stable.

Before presenting the proof of Theorem \ref{Main}, we prove the following lemma:

\begin{Lemma}\label{lemma}
Let $\finfca$ be as above, with $C\in |\fino_S(m)|$, $m\geq 5$, $d\leq mH^2$ and $r\leq 2$. In the exact sequence \eqref{exact}, we have the following:
\begin{itemize}
\item $c_1(M)=\fino_S(-H)$,
\item $c_2(M)\geq 0$, and
\item $c_2(N)\geq 0$.
\end{itemize}
As a consequence of the above, $\rk(M)=r$ and $\rk(N)=1$.
\end{Lemma}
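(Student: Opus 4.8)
The plan is to turn everything into numerics using the hypothesis that $\Pic(S)$ is generated by $\fino_S(H)$. Write $c_1(M)=\fino_S(aH)$ and $c_1(N)=\fino_S(bH)$ for integers $a,b$; additivity of first Chern classes in \eqref{exact} gives $a+b=-m$. Put $s=\rk(M)$, so that $1\le s\le r$ and $\rk(N)=r+1-s$. The target statements then become: $a=-1$ and $s=r$ (the $c_2$-bounds will come almost for free at the end). Throughout I will use that $M$, being the maximal destabilising subsheaf, is $\fino_S(H)$-semistable, and that $N$ is $\fino_S(H)$-stable.

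First I would pin down the sign of $a$. Dualising \eqref{exact} and using that $N$ is torsion-free (hence locally free off finitely many points), $M^{\vee}$ is a quotient of $\finfca^{\vee}$ away from a finite set; since $\finfca^{\vee}$ is globally generated off a finite set, so is $M^{\vee}$, whence $\det(M^{\vee})=\fino_S(-aH)$ is effective and $a\le 0$. To exclude $a=0$: in that case $s$ general sections of $M^{\vee}$ would wedge to a nonzero constant section of $\det(M^{\vee})=\fino_S$, trivialising $M^{\vee}$ and hence $M$, so $h^0(M)=s>0$; but $M\hookrightarrow\finfca$ with $h^0(\finfca)=0$ forces $h^0(M)=0$, a contradiction. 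Thus $a\le-1$; set $k=-a\ge 1$. The destabilising inequality $\tfrac{1}{\rk(M)}c_1(M).H>-\tfrac{1}{r+1}mH^2$ now reads $k<\tfrac{s}{r+1}\,m$.

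Next I would extract a numerical inequality from the second Chern classes. Expanding $c(\finfca)=c(M)c(N)$ gives $d=c_2(M)+c_1(M).c_1(N)+c_2(N)$, and since $c_1(M).c_1(N)=ab\,H^2=k(m-k)H^2$, this becomes $c_2(M)+c_2(N)=d-k(m-k)H^2$. I then bound the two summands below: a rank-one torsion-free sheaf has nonnegative $c_2$, while for a higher-rank piece the Bogomolov inequality applied to the semistable $M$ (resp. stable $N$) gives $c_2(M)\ge\tfrac{s-1}{2s}k^2H^2$ (resp. $c_2(N)\ge\tfrac{r-s}{2(r+1-s)}(m-k)^2H^2$). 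Substituting these lower bounds, together with $d\le mH^2$, into the identity above and dividing by $H^2$ yields
$$m\ \ge\ k(m-k)+\tfrac{s-1}{2s}k^2+\tfrac{r-s}{2(r+1-s)}(m-k)^2,$$
a quadratic constraint in $k$ with parameters $m,r,s$.

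Finally I would run the short arithmetic. Combining this inequality with $1\le k<\tfrac{s}{r+1}m$ and $m\ge 5$, I expect exactly one admissible possibility to survive, namely $k=1$ with $s=r$: the values $k\ge 2$ are killed because $k(m-k)$ already exceeds $m$ once $m\ge 5$, and for $r=2$ the splitting $s=1$, $\rk(N)=2$ is eliminated outright, the associated inequality forcing $m\le 3$. This gives $a=-1$, i.e. $c_1(M)=\fino_S(-H)$, and hence $s=r$, $\rk(N)=1$; the bounds $c_2(M)\ge 0$ (Bogomolov for $M$, or $c_2(M)=0$ when $M$ is a line bundle) and $c_2(N)\ge 0$ (rank-one torsion-free) then follow immediately. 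The main obstacle is precisely this interaction of the Bogomolov bounds with $d\le mH^2$: one must check that the window left open by the destabilising inequality contains only $k=1$ in every case $r\in\{1,2\}$, $1\le s\le r$, and in particular that $m\ge 5$ is exactly the threshold needed to rule out $k\ge 2$ and the spurious rank splitting.
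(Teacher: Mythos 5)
Your proof is correct and follows essentially the same route as the paper: establish $c_1(M)=\fino_S(-kH)$ with $k\geq 1$ via global generation of the dual and $h^0(\finfca)=0$, then combine the Whitney formula $d=c_2(M)+c_1(M).c_1(N)+c_2(N)$ with Bogomolov-type lower bounds on the $c_2$'s, the slope inequality $k<\tfrac{s}{r+1}m$, and $d\leq mH^2$ to kill $k\geq 2$ and the splitting $s=1$, $r=2$. The only (harmless) deviation is that you get $c_2(M)\geq 0$ by applying the Bogomolov inequality directly to the semistable maximal destabilising subsheaf $M$, where the paper instead argues by contradiction through a further destabilising sequence of $M$; your version is slightly cleaner and the arithmetic checks out in all cases.
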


\begin{proof}
We begin by proving that $c_1(M)=\fino_S(-a)$ for some $a>0$. In the case where $\mathrm{rk}(M)=1$, this is clear since $h^0(S,\finfca)=0$. In the case where $\mathrm{rk}(M)=2$, we dualise \eqref{exact} and get
$$0\rightarrow N^{\vee}\rightarrow\finfca^{\vee}\rightarrow \tilde{M}\rightarrow 0,$$
where $\tilde{M}$ satisfies $\tilde{M}^{\vee}=M$. Since $\finfca^{\vee}$ is globally generated away from a finite set, then the same must be the case for $\tilde{M}$, and so $c_1(\tilde{M})=\fino_S(a)$ with $a\geq 0$ (proof: there exists a saturated sub-linebundle $\fino_S(t)$ with $t\geq 0$ which we can inject into $\tilde{M}$, and the cokernel must be globally generated away from a finite set). If $a=0$, then $\tilde{M}=\fino_S^{\oplus 2}$. However, it then follows that $c_2(\finfca)=0$, which is a contradiction. It follows that $a>0$.

We now prove that $c_2(M)$ and $c_2(N)$ are both nonnegative. Since $N$ is torsion-free and $\fino_S(H)$-stable, then by \cite[Theorem]{bogomolov1995stable}, $c_2(N)\geq 0$. The inequality $c_2(M)\geq 0$ is clear for the $\rk(M)=1$ case. For the $\rk(M)=2$ case, suppose $c_2(M)<0$, and note that by \cite[Theorem]{bogomolov1995stable}, $M$ must then be non-$\fino_S(H)$-stable. This gives us a maximal destabilising sequence
$$0\rightarrow \fino_S(-b)\rightarrow M\rightarrow \fino_S(b-a)\otimes\fini_{\eta}\rightarrow 0,$$
where $b>0$ since $\finfca$ and hence also $M$ cannot have any global sections; where $\fini_{\eta}$ is the ideal sheaf of a finite subscheme $\eta$ (possibly empty); and where $-bH^2>-\frac{1}{2}aH^2$, implying that $2b<a$. Since $c_2(M)<0$, we have $(-b)(b-a)H^2<0$, implying that $b-a>0$, giving us the desired contradiction.

We now prove that $a=1$: If $c_1(M)=\fino_S(-a)$ and $c_1(N)=\fino_S(a-m)$ for $a\geq 2$ satisfying $\frac{1}{\mathrm{rk}(M)}c_1(M).H> -\frac{1}{r+1}mH^2$, this gives us $c_2(\finfca)\geq c_1(M).c_1(N)=a(m-a)H^2> mH^2$, which contradicts the assumption we have for $c_2(\finfca)$. (As long as $m\geq 3$, we cannot have $c_1(M)=\fino_S(1-m)$ at the same time as $\frac{1}{\mathrm{rk}(M)}c_1(M).H> -\frac{1}{r+1}mH^2$. The condition $m\geq 5$ is to ensure that the inequality $a(m-a)H^2>mH^2$ is strict.)

It now follows that $\rk(M)=r$, because of the following: If $r=2$ with $\rk(M)=1$, then by \cite[Theorem]{bogomolov1995stable}, the stability of $N$ implies that $\frac{1}{4}c_1(N)^2-c_2(N)\leq 0$, so that $c_2(N)\geq\frac{1}{4}(m-1)^2H^2$. This gives us $c_2(\finfca)\geq (m-1)H^2+\frac{1}{4}(m-1)^2H^2>mH^2$, a contradiction.
\end{proof}

We now prove the main theorem.

\begin{proof}[Proof of Theorem \ref{Main}] Let $C\in |\fino_S(m)|$ be smooth, where $m\geq 5$, let $A$ be a base-point free, complete $g^r_d$ on $C$ with $r\in\{1, 2\}$ and $d\leq mH^2$, and let $\finfca$ be the associated Lazarsfeld--Mukai vector bundle. By Lemma \ref{lemma}, there exists a rank $r$ vector bundle $M$ with $c_1(M)=\fino_S(-H)$ and $c_2(M),c_2(N)\geq 0$ such that
$$0\rightarrow M\rightarrow\finfca\rightarrow N\rightarrow 0.$$
Because $M$ injects into $\finfca$, we can compose with the map from $\finfca$ into $\fino_S^{\oplus r+1}$ and get the following commutative diagram, where $\fing$ is the cokernel:

\begin{equation}\nonumber
\xymatrix{0 \ar[r] & M \ar[r]\ar[d] & \fino_S^{\oplus r+1} \ar[r]^<<<<<{\widetilde{\mathrm{ev}}}\ar@{=}[d] & \fing \ar[r]\ar[d]^{\phi} & 0 \\
            0 \ar[r] & \finfca \ar[r] & \fino_S^{\oplus r+1} \ar[r]^<<<<<{\mathrm{ev}} & A \ar[r] & 0.
}
\end{equation}
We see that $\fing$ is torsion-free, since any possible torsion element of $\fing$ would map to $0$ in $A$, and by the snake lemma, $\ker(\phi)\cong N$, which is torsion-free. We have $\rk(\fing)=1$ and $c_1(\fing)=\fino_S(1)$.

Note that $h^0(N)=0$, since $N$ is torsion-free of rank $1$ and has negative $c_1$. This implies that the image of each global section under the map $\phi$ must be nonzero in $A$. Furthermore, since $\mathrm{ev}:\fino_S^{\oplus r+1}\rightarrow A$ is surjective on global sections, the same must apply for $\phi:\fing\rightarrow A$. It follows that $h^0(S,\fing)=h^0(C,A)=r+1$.

Since $\mathrm{rk}(\fing)=1$, then $\fing=\fino_S(1)\otimes\fini_{\xi}$, where $\finixi$ is the ideal sheaf of a finite subscheme $\xi$ (nonempty because $h^0(S,\fino_S(1))>r+1$). Let $A'\in |A|$. We prove that $A'+\xi'=H'_{|C}$, where $\xi':=\xi\cap C$ and $H'$ is a hyperplane section. The map $\phi$ corresponds to a nonzero element in $\mathrm{Hom}_{\fino_S}(\fino_S(1)\otimes\fini_{\xi},A)=\mathrm{Hom}_{\fino_S}(\fini_{\xi},A\otimes\fino_S(-1))$. This implies that $h^0(S-\xi,A\otimes\fino_S(-1))>0$, i.e., $A'>H'_{|C}-\xi'$ for some hyperplane section $H'$. Since $A$ is base-point free, then this means that either $A'=H'_{|C}-\xi'$ or $h^0(C,A)>h^0(C,\fino_C(1)\otimes\mathcal{I}_{\xi'})$. However, since $h^0(C,A)=h^0(S,\fino_S(1)\otimes\finixi)$, the latter inequality would imply that $\fino_S(1)\rightarrow\fino_C(1)$ has a nonzero kernel on global sections, which cannot be the case since we are assuming that $C\in\fino_S(m)$ with $m\geq 5$. It thus follows that $A'=H'_{|C}-\xi'$. Since $|A|$ is a $g^r_d$, then the base-locus of the hyperplane sections containing $\xi'$ must be an $(n-r-1)$-plane multisecant.
\end{proof}

\begin{Remark}
It would be interesting to see if this result can be extended to $3\leq r \leq H^0(\fino_S(1))-2$. Most of the methods used in Lemma \ref{lemma} seem to work, but with the excpetion that we lose control over $c_2(M)$. However, if we manage to prove that $c_2(M)\geq 0$ in some (or all) of these cases, then Theorem \ref{Main} would also apply for these values of $r$.
\end{Remark}

\subsection*{Acknowledgments}
Thanks to Andreas Leopold Knutsen, Ciro Ciliberto and Shengtian Zhou for helpful remarks in the process of this work. The author would also like to thank the reviewer for helpful comments.

\bibliographystyle{plain}
\bibliography{NHR}

\end{document}